\numberwithin{equation}{section}
\newtheorem{theorem}{Theorem}[section]
\newtheorem{lemma}[theorem]{Lemma}
\newtheorem{conjecture}[theorem]{Conjecture}
\theoremstyle{definition}
\newtheorem{definition}[theorem]{Definition}
\newtheorem{question}[theorem]{Question}
\theoremstyle{remark}
\title{Character Values of Stanley Sequences}
\author{Mehtaab Sawhney}
\thanks{Massachusetts Institute of Technology, Cambridge MA. Email: \texttt{msawhney98@yahoo.com}}
\begin{document}

\begin{abstract}
Stanley and Odlyzko proposed a method for greedily constructing sets with no $3$-term arithmetic progressions. It is conjectured that there is a dichotomy between such sequences: those that have a periodic structure as the sequence satisfies certain recurrence relations while others appear to be chaotic. One large class of sequences that have these periodic behaviors are known as independent sequences that have two parameters, a character and a growth factor. It was conjectured by Rolnick that all but a finite set of integers can be achieved as characters of a independent sequences. Previously the only large class of integers known to be characters where those with base $3$ representations consisting solely of the digits $0$ and $2$. This paper dramatically improves this result by demonstrating that all even integers not congruent to $244\mod 486$ can be achieved as characters, therefore demonstrating that the set of all characters has a positive lower density. 
\end{abstract}

\maketitle
\section{Introduction}
A set of nonnegative integers is defined to be $p$-free if it contains no arithmetic progressions of length $p$, where $p$ is a prime. Define $r_p(n)$ to be the cardinality of the largest subset of $\{0,1,\ldots,n\}$  that contains no $p$-term arithmetic progressions.  Erd\H{o}s and Tur\'an \cite{ET} stated a conjecture of Szekeres, that for an odd prime $p$, that $r_p(n)=\Theta(n^{\log_{p-1}{p}})$. However this conjecture was disproved, and the best known lower bound for $p=3$ is $n^{1-o(1)}$ due to Elkin \cite{E}. The best upper bound for $p=3$ is $O\big(\frac{n(\log\log n)^5}{\log n}\big)$ due to a  result of Sanders \cite{S}. \\
In 1978 Odlyzko and Stanley \cite{OS} proposed the following method for constructing $3$-free arithmetic progressions.
\begin{definition}
Let $A:=\{a_1,\ldots,a_n\}$ be a finite set of nonnegative integers with no nontrivial $3$-term arithmetic progressions that contains $0$ and $a_1<a_2<\cdots<a_n$. For each integer $k\ge n$, let $a_{k+1}$ be the least integer greater than $a_k$ such that $\{a_1,\ldots, a_k,a_{k+1}\}$ has no arithmetic progressions. The Stanley sequences $S(A)=S(a_1,\ldots, a_n)$ is the sequence $a_1,\ldots, a_n, a_{n+1}, \ldots$.
\end{definition}
The simplest example of Stanley sequence is $S(0)$, which is the set of integers which have no $2$'s in there ternary expansions. Odlyzko and Stanley \cite{OS} demonstrate that $S(0, 3^k)$ and $S(0, 2\cdot 3^k)$ have similarly explicit descriptions, and Rolnick \cite{R} gives a vast generalization of this result. However, Erd\H{o}s and Graham \cite{EG} note that a sequences as simple as $S(0,4)$ appears to have chaotic behavior. This leads to the following conjecture which was suggested by Odlyzko and Stanley \cite{OS}.
\begin{conjecture}
Let $S(A)=(a_n)$ be a Stanley sequence. Then asymptotically, one the following two growth rates is satisfied.\\
\begin{itemize}
\item (Type 1) $\frac{\alpha}{2}\le \lim\inf a_n/n^{\log_2{3}}\le \lim\sup a_n/n^{\log_2{3}}\le \alpha$
\item (Type 2) $a_n=O\bigg( \frac{n^2}{\log n} \bigg )$
\end{itemize}
\end{conjecture}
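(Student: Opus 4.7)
The plan is to try to establish a structural dichotomy: identify an invariant $P$ attached to a Stanley sequence $S(A)$ such that $P$ being finite forces Type~1 growth, while $P$ being infinite forces Type~2 growth. A natural candidate for $P$ is the least integer $k$ (if any) such that the trace $S(A) \bmod 3^k$ eventually becomes periodic in a self-similar sense compatible with the Odlyzko--Stanley recurrences $S(0,3^k)$ and $S(0,2\cdot 3^k)$. Intuitively, $P$ finite should correspond to the sequence being (eventually) an independent sequence in the sense Rolnick introduced, and so the character/growth factor framework already applies.

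For the Type~1 half I would argue as follows. Assuming $P < \infty$, one has a scale $3^k$ at which the first block of $S(A)$ determines all subsequent blocks under a doubling construction: each window $S(A) \cap [0,2\cdot 3^m)$ is obtained from $S(A)\cap[0,3^m)$ by appending a translate of itself, for all $m$ past some threshold. Iterating gives $|S(A) \cap [0,3^m)| = c \cdot 2^m$ up to bounded multiplicative fluctuation, which on inverting yields $a_n = \Theta(n^{\log_2 3})$; the constants $\alpha/2$ and $\alpha$ come from oscillations as $n$ moves between consecutive powers of $2$ in the counting sequence. This direction is essentially already in the literature in the $p$-adic independent setting, so the real content is showing that $P<\infty$ is forced under the hypothesis.

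For the Type~2 half the plan is a density-increment / counting argument. Suppose $P = \infty$. Then at arbitrarily large scales $m$, the greedy rule is still producing new elements that are not predicted by any finite-scale self-similar structure, which I would translate into the statement that $S(A) \cap [0,N]$ has additive energy strictly below what a structured 3-AP-free set of comparable density possesses. Combined with a Sanders-type upper bound $r_3(N)=O(N(\log\log N)^5/\log N)$ applied to the truncations of $S(A)$, one obtains $|S(A) \cap [0,N]| = O(N/\log N)$ (absorbing the $(\log\log N)^5$ into an $o(1)$ with a slightly weaker bound), which inverts to $a_n = O(n^2/\log n)$ as desired.

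The main obstacle, and the reason this conjecture has stood since 1978, is proving that the dichotomy is \emph{sharp}: one must rule out intermediate growth rates such as $n^{1.5}$. The two halves above each look plausible under a clean hypothesis on $P$, but the actual work is showing that every Stanley sequence falls into exactly one of the two buckets rather than interpolating between them. This seems to require a genuine rigidity theorem for greedy 3-AP-free constructions -- roughly, that partial self-similarity at some scale $3^k$ propagates to full self-similarity -- and any proof will likely either produce a new quantitative invariant detecting the failure of self-similarity, or rely on a substantial strengthening of the inverse theory for sets of moderate additive structure, both of which are well beyond current techniques.
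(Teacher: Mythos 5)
The statement you are addressing is Conjecture 1.2 of the paper, the Odlyzko--Stanley dichotomy. The paper does not prove it and explicitly records that it is open (indeed, no Stanley sequence is even \emph{known} to exhibit Type 2 growth), so there is no proof in the paper to compare against; what you have written is a research plan, not a proof, and you say as much in your final paragraph. That candor is appropriate, but the proposal should not be presented as progress toward the statement, because its two halves are not merely incomplete --- one of them is aimed in the wrong direction.

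Concretely: in your Type 2 argument you invoke Sanders's bound $r_3(N) = O(N(\log\log N)^5/\log N)$ to conclude $|S(A)\cap[0,N]| = O(N/\log N)$ and then claim this ``inverts to $a_n = O(n^2/\log n)$.'' The inversion goes the other way. The statement $a_n = O(n^2/\log n)$ is an \emph{upper} bound on $a_n$, hence a \emph{lower} bound on the counting function, roughly $|S(A)\cap[0,N]| \gg \sqrt{N\log N}$. Sanders's theorem is an upper bound on the size of any 3-AP-free subset of $[N]$ and therefore can only ever give a lower bound on $a_n$; it applies to every Stanley sequence uniformly and says nothing about the chaotic/dense regime. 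The correct heuristic for Type 2 is the one Odlyzko and Stanley gave: the greedy rule covers integers with forbidden progressions at a rate suggesting the sequence has density about $\sqrt{\log N / N}$, which is a statement about how \emph{few} integers get excluded, not about $r_3$. Separately, your invariant $P$ is never defined precisely enough to make either implication checkable, and the central rigidity claim (partial self-similarity propagates to full self-similarity, and nothing interpolates between the two growth regimes) is, as you acknowledge, the entire content of the conjecture. In short: the statement remains a conjecture, and this proposal does not close it.
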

Although Odlyzko and Stanley had only considered Type 1 sequences when $\alpha=1$, Rolnick and Venkataramana \cite{RV} demonstrated that all rational $\alpha$ with denominators which are powers of three can be achieved. Furthermore, although Odlyzko and Stanley \cite{OS} provide ``probabilistic" justification that a ``random" Stanley sequence should be of Type 2, no sequence is known to have Type 2 growth. Lindhurst \cite{L} computes $S(0,4)$ for large values and observes that it appears to follows Type 2 growth. Rolnick \cite{R} began a study of attempting to classify all Type 1 sequences and noted that a large class of sequences of Type 1 have the following structure.
\begin{definition}
A Stanley sequence $S(A)=(a_n)$ is \emph{independent} if there are constants $\lambda$ and $\chi$ such that for all $k\ge \chi$ and $0\le i<2^k$ 
\begin{itemize}
\item $a_{2^k+i}=a_{2^k}+a_i$
\item $a_{2^k}=2a_{2^k-1}-\lambda+1$
\end{itemize}
\end{definition}
The constant $\lambda$ is referred to as the \emph{character} of the Stanley sequence while $a_{2^\chi}$ is referred to as the \emph{repeat factor} when $\chi$ is chosen minimally. In particular, in a vague sense, $2^{\chi}$ is first point at which the Stanley sequence appears to demonstrate periodic behavior while $\lambda$ is the shift after every block of terms. Rolnick and Venkatarama \cite{RV} demonstrated that all sufficiently large integers can be repeat factors of independent sequences. For the character on the other hand, Rolnick \cite{R} demonstrated that $\lambda$ is nonnegative, and conjectured that all but a finite set of character values are possible, and gave a set of excluded integers based on computations for character values between $0$ and $74$.
\begin{conjecture}\cite{R}
All nonnegative integers $\lambda$ not in the set $\{1,3,5,9,11,15\}$ can be achieved as characters of an independent Stanley sequence.
\end{conjecture}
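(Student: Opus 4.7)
The goal is to exhibit, for every $\lambda\notin\{1,3,5,9,11,15\}$, an explicit finite set $A$ such that $S(A)$ is an independent Stanley sequence with character $\lambda$. Independence amounts to $A$ being a greedy $3$-AP-free block of length $2^\chi$ whose iterated doubling under $a_{2^\chi+i}=a_{2^\chi}+a_i$ is again $3$-AP-free and remains the greedy continuation, so the task reduces to engineering an initial block with the prescribed shift $a_{2^\chi}=2a_{2^\chi-1}-\lambda+1$. I would organize the construction into three modules indexed by the parity and residue class of $\lambda$.

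\textbf{Module 1 (even $\lambda$, generic residues).} For every even $\lambda\not\equiv 244\pmod{486}$, I would assemble a block by concatenating ``elementary templates'' indexed by a ternary-like expansion of $\lambda$, each template contributing a fixed increment to the character. The verification that the concatenation is $3$-AP-free and reproduces the greedy sequence reduces to finitely many case checks inside templates and across template boundaries, with independence following by induction on block length.

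\textbf{Module 2 (exceptional even class).} For $\lambda\equiv 244\pmod{486}$, where the natural template of Module~1 exhibits a carrying collision, I would introduce a correction template at the next base level ($3^7$ rather than $3^6$) that contributes exactly the residue $244$ while routing around the collision. The additional degree of freedom should close the exceptional class, subject to case analysis analogous to Module~1.

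\textbf{Module 3 (odd $\lambda$).} Since every template used in Modules 1--2 is mirror-symmetric and therefore contributes an even character, a genuinely new ingredient is needed. The plan is: (i) perform a bounded computer search to certify independent sequences with small odd base characters $\lambda_0\in\{7,13,17,19\}$, one representative for each target residue class modulo $4$; (ii) prove a \emph{sum lemma} asserting that if $S(A)$ is independent with character $\lambda_0$ and $S(B)$ is independent with character $\mu$ whose repeat factor is sufficiently large relative to $\max A$, then a controlled interleaving of $A$ with a scaled copy of $B$ yields an independent sequence of character $\lambda_0+\mu$, in the spirit of the Rolnick--Venkataramana argument for repeat factors; (iii) apply the sum lemma with the appropriate odd base case $\lambda_0$ together with the even shifts from Modules 1--2, thereby covering every allowed odd $\lambda\geq 17$ and leaving only the pre-certified cases $\lambda=7$ and $\lambda=13$.

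\textbf{Main obstacle.} The decisive difficulty is step~(ii) of Module~3. All known doubling constructions preserve character parity, and breaking parity forces an asymmetric insertion whose scaled replicas interact nontrivially over long ranges; one must simultaneously rule out new $3$-APs between distant replicas and verify that the greedy rule does not insert unintended terms. Devising and verifying such an asymmetric construction, or alternatively finding a parity-changing transformation of already-constructed sequences, appears to require structural insight into the greedy algorithm beyond what the symmetric even-case argument provides.
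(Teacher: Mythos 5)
The statement you are trying to prove is stated in the paper as a \emph{conjecture} (due to Rolnick), and the paper does not prove it; it only establishes the partial result that every even $\lambda\not\equiv 244\pmod{486}$ is a character, via basic sequences built from bases $b_k=3^k\ell$ and the explicit near-modular sets $A_i^j$, $B_i^j$, with a generating-function computation showing which values of $2a_{2^k-1}-a_{2^k}+1$ are realizable. Your Module~1 is in the same spirit as that argument (your ``elementary templates indexed by a ternary-like expansion'' play the role of the paper's choice of $b_i-3^i$ in each base-$3$ digit position), so that part of the plan is plausible. But Modules~2 and~3 are exactly the parts of the conjecture that remain open, and your proposal supplies no actual construction for either.

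Concretely: for the class $244\pmod{486}$ the paper explicitly could not produce the needed near-modular set and instead records the obstruction as a separate open conjecture (the existence, for every $\ell$, of a near-modular set mod $3^{\ell+1}$ of size $2^{\ell+1}$ with largest element $2\cdot 3^\ell$ or $4\cdot 3^\ell$); your claim that a ``correction template at the next base level'' closes this class is an assertion, not an argument, and there is no reason given why the carrying collision disappears at $3^7$. For odd $\lambda$, your ``sum lemma'' in Module~3(ii) is precisely the missing structural ingredient: no additivity of characters under interleaving is known, the paper's machinery produces only even characters by construction (every term $2(b_i-3^i)$ and $2\max A_i^j-2\sum 3^j$ contributes an even amount), and you yourself flag this step as the decisive unsolved difficulty. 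A proof proposal whose key lemma is acknowledged to be out of reach is a research program, not a proof; as it stands the proposal establishes nothing beyond (at best) the even case already covered by the paper's Theorem~1.5, and even there the details of your template verification are not supplied.
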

In private communication, Moy \cite{moy} has demonstrated that $\{1,3,5,9,11,15\}$ cannot be achieved as characters of independent Stanley sequences. For infinite sets of integers whose elements can be achieved as characters of independent Stanley sequences, Rolnick \cite{R} had previously demonstrated that all integers with base $3$ representations consisting of the integers $0$ and $2$ have the form. In this paper we generalize the previous result as follows.
\begin{theorem}
All nonnegative integers $\lambda$ $\equiv 0$$\mod 2$ and $\lambda\not\equiv 244 \mod 486$ can be achieved as characters of independent Stanley sequences. 
\end{theorem}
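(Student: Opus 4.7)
The plan is to construct, for each even $\lambda$ with $\lambda\not\equiv 244\pmod{486}$, an explicit finite initial set $A_\lambda\ni 0$ such that $S(A_\lambda)$ is an independent Stanley sequence with character exactly $\lambda$. This will generalize Rolnick's construction, which covered only the integers whose base-$3$ expansion uses the digits $0$ and $2$.

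The first step is a structural reduction showing that independence can be certified by checking a finite initial segment. The defining doubling identities $a_{2^k+i}=a_{2^k}+a_i$ and $a_{2^k}=2a_{2^k-1}-\lambda+1$ are inductive in nature, so if $S(A_\lambda)$ is verified to match a prescribed template through stage $2^\chi$, the template forces all later stages and the character is automatically $\lambda$. This has to be set up to accommodate a richer family of initial sets than the ones Rolnick considered, but the overall inductive scheme is the same.

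Next I would construct the family $\{A_\lambda\}$ by cases on the residue of $\lambda$ modulo $486=2\cdot 3^5$. The high-order part of $\lambda$ will be absorbed by Rolnick-style generators of the form $2\cdot 3^k$ (which contribute digits $0$ and $2$ in base $3$), while the low-order part, below $486$, will be handled by adjoining a short list of hand-chosen small elements to $A_\lambda$. For each of the $242$ admissible even residues modulo $486$, I would supply such a template and verify it directly; the exclusion of $244$ arises because no local initial segment can correct the shift to that particular residue.

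The main obstacle is the verification step: given a prescribed template for $S(A_\lambda)$, one must show that the greedy rule actually produces it. This amounts to a delicate base-$3$ combinatorial argument that simultaneously certifies two things at every position: every integer skipped by the template is blocked by a $3$-term arithmetic progression with two earlier terms, and every templated term is itself $3$-AP-free relative to the earlier terms. Carry behavior in base $3$ interacts subtly with the AP condition, which is precisely why the modulus $486$ appears and why $244$ must be excluded. Once the case analysis is in place, the lower-density statement is immediate: the set of characters contains $242$ residue classes modulo $486$, hence has lower density at least $121/243>0$.
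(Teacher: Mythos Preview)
Your outline is a plan rather than a proof, and it is missing the one idea that makes the verification tractable. The paper does \emph{not} verify the greedy rule directly against a template; instead it uses the machinery of modular and near-modular sets together with the structural theorem $S(A)=A+N\cdot S(0)$ (Theorem~1.7 from Moy--Rolnick). Once one knows that a finite set $L\subseteq\{0,\dots,N-1\}$ is modular modulo $N$, independence of $S(L)$ with the desired character is automatic, and checking modularity is a finite computation. Your proposed ``delicate base-$3$ combinatorial argument'' that every skipped integer is blocked and every templated integer is AP-free is exactly the hard part that this reduction avoids; you acknowledge it as the main obstacle and do not indicate how to carry it out. Moreover, the paper does not do a $242$-case residue analysis. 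It handles $\lambda\equiv 0,2\pmod 6$ uniformly with \emph{basic} sequences via a generating-function identity (Lemma~2.3), and then covers $\lambda\equiv 4\pmod 6$ by combining just eight explicit near-modular sets $A_i^j,B_i^j$ ($1\le i\le 4$) with basis elements through Lemma~2.2; the generating-function argument then lifts these eight seeds to all residues $3^i+1$ and $5\cdot 3^i+1$ modulo $2\cdot 3^{i+1}$.

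One further point: your explanation that ``the exclusion of $244$ arises because no local initial segment can correct the shift to that particular residue'' is not what the paper shows, and is likely false. The residue $244\pmod{486}$ is simply the one class in $\lambda\equiv 4\pmod 6$ not reached by the seeds $A_i^j,B_i^j$ for $i\le 4$; the paper explicitly leaves it open (Conjecture~3.1) whether a near-modular set with the right parameters exists for larger $i$, which would cover $244$ as well. So the exclusion is a limitation of the construction, not an obstruction you have identified.
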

In order to prove the main theorem we need several notions related to modular Stanley sets and basic sequences defined by Moy and Rolnick \cite{RM}. 
\begin{definition}
Suppose a set $A\subseteq \{0,1,\ldots, N-1\}$, $0\in A$, and for every integer $x$ there exists $y\ge z$ in $A$ such that $x\equiv 2y-z \mod N$. Define $A$ to be a \emph{modular set}$\mod N$ if $x,y,z\in A$ and $x\equiv 2y-z \mod N$ implies $x=y=z$. Essentially, the first condition is that every integer is covered by a 3-term arithmetic progression$\mod N$ while the second condition states that the set itself has no 3-term arithmetic progressions$\mod N$ other than the trivial arithmetic progressions.
\end{definition}
The key use of modular sequences in this paper stems from Theorem 2.4 in Moy and Rolnick \cite{RM} which is used when considering basic sequences. Note that $X+Y=\{x+y~|~x\in X, y\in Y\}$ and $\alpha X=\{\alpha\cdot x~|~x\in X\}$ for sets $X$ and $Y$.
\begin{theorem}\cite{RM}
Suppose that $A$ is a modular set modulo $N$ with N being a positive integer. Then $S(A)=A+N\cdot S(0)$.
\end{theorem}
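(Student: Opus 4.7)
The plan is to prove $S(A) = A + N \cdot S(0)$ by induction, showing that the two sets agree term by term when enumerated in increasing order. Set $B := A + N \cdot S(0)$ with enumeration $b_0 < b_1 < \cdots$. The proof splits into two sub-claims: (i) $B$ is 3-AP-free, so the greedy construction of $S(A)$ can always pick the next element of $B$; and (ii) between consecutive elements of $B$, every intervening integer $x$ already forms a nontrivial 3-AP with smaller elements of $B$, so the greedy algorithm cannot select anything outside $B$.

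For (i), suppose $u_1, u_2, u_3 \in B$ form a 3-AP. Writing $u_j = N s_j + a_j$ with $s_j \in S(0)$ and $a_j \in A$, and reducing $u_1 + u_3 = 2u_2$ modulo $N$, the modular-set hypothesis forces $a_1 = a_2 = a_3$; subtracting and using that $S(0)$ is 3-AP-free then forces $s_1 = s_2 = s_3$, so the AP is trivial. For (ii), write $x = qN + r$ with $0 \le r < N$, and split on cases. If $r \in A$ but $q \notin S(0)$, the greedy definition of $S(0)$ supplies $s < s' < q$ in $S(0)$ with $2s' - s = q$, and then $(sN + r,\; s'N + r,\; x)$ is a 3-AP whose first two terms lie in $B$ and below $x$. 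If $r \notin A$, invoke the modular-set property to get $y \ge z$ in $A$ with $2y - z \equiv r \pmod N$; since $r \notin A$ we must have $y > z$. Writing $2y - z = r + dN$ pins down $d \in \{-1, 0, 1\}$, and the objective becomes the existence of $s_1, s_2 \in S(0)$ with $2s_2 - s_1 = q - d$ and $s_1 N + z, s_2 N + y < x$, which would produce the AP $(s_1 N + z,\; s_2 N + y,\; x)$.

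The technical heart of the argument, and the step I expect to be the main obstacle, is producing these $s_1, s_2 \in S(0)$. The tool I would use is the self-referential property of $S(0)$: for every nonnegative integer $m$, there exist $s, s' \in S(0)$ with $2s' - s = m$ (allowing $s = s' = m$ when $m \in S(0)$), which is a direct consequence of the greedy definition. Applied to $m = q - d$, this yields candidates; the remaining work is purely a size check to ensure $s_1 N + z, s_2 N + y < x$. The delicate boundary cases are small $q$ (especially $q = 0$, which forces $x = r < N$) and the sign choice $d = 1$, which can push the auxiliary terms above $x$; these would be resolved either by choosing alternative modular witnesses $y, z$ for $r$, or by using that $x$ lies strictly below the next element $b_m$ of $B$ to bound $q$ in terms of nearby elements of $S(0)$. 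Once the size check is verified in every subcase, part (ii) is established and the induction closes, yielding $S(A) = B$.
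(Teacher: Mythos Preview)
The paper does not actually prove this theorem; it merely quotes it as Theorem~2.4 of Moy--Rolnick \cite{RM} and uses it as a black box. So there is no ``paper's proof'' to compare against, and I can only evaluate your proposal on its own merits.

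Your overall architecture is sound: showing that $B=A+N\cdot S(0)$ is $3$-AP--free is exactly the right use of the modular-set hypothesis, and the case $r\in A,\ q\notin S(0)$ is handled cleanly. The difficulty is precisely where you flag it. In the case $r\notin A$ you obtain $y>z$ in $A$ with $2y-z\equiv r\pmod N$, hence $2y-z=r+dN$ with $d\in\{0,1\}$ (your $d=-1$ cannot occur since $y>z$ forces $2y-z>0$). When $d=1$ one has $y>r$, and the required middle term $s_2N+y$ must satisfy $s_2\le q-1$; for $q=0$ this is impossible. You propose to escape either by finding an alternative witness with $d=0$ or by a size argument, but neither is justified: the modular-set definition only guarantees covering \emph{modulo $N$}, not that every $r<N$ is hit exactly by some $2y-z$ with $y\ge z$ in $A$. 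Establishing that a $d=0$ witness always exists is itself a lemma that needs proof, and your sketch supplies none. Until that is done, the $q=0$, $d=1$ sub-case is a genuine gap rather than a routine size check.
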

Next, we introduce the notion of basic sequences defined by Moy and Rolnick \cite{RM} which generalize the notion of writing the terms of the Stanley sequence in base 3 that is present in $S(0)$.
\begin{definition}
We say that a Stanley sequences $(a_n)$ is \emph{basic} if there exists a basis $B=(b_k)$ such that $b_k=\alpha\cdot 3^k$ for $k$ sufficiently large and the elements of $A$ are sums of finite subsets of $B$. In particular \[\bigg\{\sum_{k=0}^{\infty}\delta_kb_k~|~\delta_k\in\{0,1\}, \sum_{k=0}^{\infty}\delta_k<\infty\bigg\}\] are the elements of a certain Stanley sequences. The sequence $B$ is referred to as the \emph{basis} of this Stanley sequence.
\end{definition}
The easiest example of a basic sequence is $S(0)$, which is basic with basis $B=\{1,3,3^2,\ldots \}$ by the explicit description given earlier. Finally we introduce a sufficient condition for a sequence to be basis, this is Theorem 4.4 in Moy and Rolnick \cite{RM}.
\begin{theorem}\cite{RM}
The sequence $B=(b_k)$ is a valid basis provided that 
\begin{itemize}
\item $3^k$ is the largest power of $3$ that divides $b_k$.
\item $b_k=3^k$ for $k$ sufficiently large.
\end{itemize}
\end{theorem}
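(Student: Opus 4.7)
The plan is to reduce the claim to the modular-sequence theorem of Moy--Rolnick cited above, by extracting a finite head of the basis $B$. Pick $K$ large enough that $b_k = 3^k$ for all $k \geq K$, and moreover large enough that $F := \{\sum_{k=0}^{K-1} \delta_k b_k : \delta_k \in \{0,1\}\}$ is contained in $\{0, 1, \ldots, 3^K - 1\}$, which is possible since $F$ is finite. The set $A$ generated by $B$ then splits as $A = F + 3^K \cdot S(0)$, and it suffices to show $F$ is a modular set modulo $3^K$: the cited theorem would then yield $S(F) = F + 3^K \cdot S(0) = A$, so that $B$ is a valid basis.

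The no-$3$-AP and uniqueness conditions both reduce to a digit-by-digit analysis modulo powers of $3$. If $\sum (\alpha_k - \beta_k) b_k = 0$ with $\alpha_k, \beta_k \in \{0, 1\}$, pick the smallest $k$ with $\alpha_k \neq \beta_k$; reducing modulo $3^{k+1}$ and using that $3^k$ exactly divides $b_k$ forces $\alpha_k \equiv \beta_k \pmod 3$, a contradiction. The same peeling, applied to $\epsilon_k := \alpha_k + \gamma_k - 2\beta_k \in \{-2, \ldots, 2\}$ for a hypothetical $3$-AP $a + c = 2b$ in $F$, forces $\epsilon_k \equiv 0 \pmod 3$, hence $\epsilon_k = 0$, hence $\alpha_k = \beta_k = \gamma_k$ for all $k$. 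Thus $F$ has unique $B$-representations and admits no nontrivial $3$-APs, even modulo $3^K$.

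The covering condition---every integer $x$ equals $2y - z \pmod{3^K}$ for some $y \geq z$ in $F$---is the heart of the argument. I would build the digits $(\beta_k, \gamma_k)$ recursively: at position $k$, choose $(\beta_k, \gamma_k) \in \{0,1\}^2$ so that $(2\beta_k - \gamma_k) c_k$ matches the current target residue modulo $3$, which is possible because $c_k := b_k / 3^k$ is a unit modulo $3$; then update the target to $(r_k - (2\beta_k - \gamma_k) b_k)/3$ and iterate through $k = K-1$. This produces $y, z \in F$ with $2y - z \equiv x \pmod{3^K}$. \emph{The main obstacle is enforcing $y \geq z$}: at each position where the target residue modulo $3$ is the ``doubled'' one, both $(\beta_k, \gamma_k) = (0, 1)$ and $(1, 0)$ are valid but differ by $\pm 3 b_k$, so the recursion branches; resolving this freedom from the largest $k$ downward and using a slack count---$4^K - 2^K$ off-diagonal pairs versus only $3^K - 2^K$ residues outside $F$---should yield a pair with $y \geq z$, and formalizing this monotonicity is the technical crux.
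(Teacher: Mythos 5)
The paper does not prove this theorem itself (it is quoted from Moy--Rolnick), but its Lemma 2.2 is the direct generalization and contains the full argument, so that is the right benchmark. Your overall architecture matches it exactly: split off the finite head $F$, show $F$ is a modular set modulo $3^K$, and invoke $S(F)=F+3^K\cdot S(0)$. Your uniqueness and no-$3$-AP arguments by peeling digits modulo $3^{k+1}$ are correct (the key point, which you use, is that $\epsilon_k\in\{-2,\dots,2\}$ and divisibility by $3$ forces $\epsilon_k=0$).

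The gap is in the covering step, and you have correctly located it but not closed it. Your proposed resolution --- a global count of $4^K-2^K$ off-diagonal pairs against $3^K-2^K$ residues --- does not work as stated: a surplus of pairs over residues says nothing about whether each \emph{individual} residue class admits a representative with $y\ge z$, and ``resolving the freedom from the largest $k$ downward'' is not a proof. The fix is much simpler and is local, not global. At each position $k$ the value $2\beta_k-\gamma_k$ for $(\beta_k,\gamma_k)\in\{0,1\}^2$ takes the residues $0$ (only from $(0,0)$), $1$ (only from $(1,1)$), and $2$ (from \emph{both} $(0,1)$ and $(1,0)$) modulo $3$; since $c_k=b_k/3^k$ is a unit, exactly one target residue is doubly covered, and in that case you may always choose $(\beta_k,\gamma_k)=(1,0)$ rather than $(0,1)$ (the carry you propagate differs, but your recursion already absorbs whatever carry the choice produces). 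With this convention $\beta_k\ge\gamma_k$ for every $k$, hence $y=\sum\beta_k b_k\ge\sum\gamma_k b_k=z$ termwise, and no counting is needed. This is precisely what the paper's explicit case analysis in Lemma 2.2 encodes: in every one of its subcases, $b_m$ is added to both of $y^{(m-1)},z^{(m-1)}$, to neither, or to $y^{(m-1)}$ alone --- never to $z^{(m-1)}$ alone --- so $y-z$ is nondecreasing throughout the construction. With that one observation inserted, your proof is complete and is essentially the paper's.
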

\section{Characters of Stanley Sequences}
The first lemma is a technical result that allows us to extend the notion of ``basic" sequences by removing a set of basis elements and replacing them with a set of elements that is closely related 
to a modular set. This can be viewed as combining the first set of digits and then separately dealing with the basis elements, and given this, perspective the proof closely follows that of Theorem 4.4 in Moy and Rolnick \cite{RM}. For technical reasons, however, it is first necessary to introduce the notion of near-modular sets, which generalize the notion of modular sets in an obvious way.
\begin{definition}
A \emph{near-modular} set $A$ with respect to a positive integer $N$ is a set of nonnegative integers satisfying 
\begin{itemize}
\item $0\in A$ 
\item If integers $x$, $y$, $z$ in $A$ satisfy $x\equiv 2y-z\mod N$ then $x=y=z$.
\item For any integer $x$ there exist $y$ and $z$ in $A$ such that $x\equiv 2y-z\mod N$ and $y\ge z$.
\end{itemize}
\end{definition}
Note that unlike modular sets defined in by Moy and Rolnick in \cite{RM}, it is possible for near modular sets to have $\max\{A\}\ge N$.
\begin{lemma}
Suppose that $A$ is a near-modular set with respect to $3^{\ell}$ for $\ell\ge 1$ and $|A|=2^{\ell}$. Furthermore, consider a sequence $(b_k)$ for $k\ge 0$ such that the following two conditions hold: 
\begin{itemize}
\item $3^{k+\ell}$ is the largest power of three dividing $b_k$.
\item For $k$ sufficiently large, $b_k=3^{k+\ell}$.
\end{itemize}
Then
\[\bigg\{a+\sum_{k=0}^{\infty} \delta_kb_k~|~a\in A, \delta_k\in\{0,1\}, \sum_{k=0}^{\infty}\delta_k<\infty\bigg\}\]
are the elements of an independent Stanley sequence.
\end{lemma}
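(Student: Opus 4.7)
The plan is to adapt Moy and Rolnick's proof of their Theorem~4.4 so that a near-modular initial set $A$ replaces the singleton seed $\{0\}$. Let $S$ denote the proposed set, and for each $s\in S$ write $s=a(s)+B(s)$ with $a(s)\in A$ and $B(s)=\sum_k\delta_k b_k$ a sum of distinct $b_k$'s. To show $S$ contains no nontrivial 3-APs, suppose $x,y,z\in S$ satisfy $x+z=2y$. Since $3^\ell\mid b_k$ for all $k$, reducing modulo $3^\ell$ gives $a(x)+a(z)\equiv 2a(y)\pmod{3^\ell}$, and near-modularity of $A$ forces $a(x)=a(y)=a(z)$. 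Canceling these equal parts and dividing by $3^\ell$ yields $B(x)/3^\ell+B(z)/3^\ell=2B(y)/3^\ell$, with each term a finite sum of distinct members of $(b_k/3^\ell)$. Since $(b_k/3^\ell)$ satisfies the hypotheses of Theorem~4.4 of Moy--Rolnick, the resulting basic Stanley sequence $T$ is 3-AP-free, forcing $B(x)=B(y)=B(z)$ and hence $x=y=z$.

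Next I would show that $S$ arises as a Stanley sequence by verifying that every $n\notin S$ completes a 3-AP with two strictly smaller elements of $S$. Given $n$, apply the covering clause of near-modularity to pick $y\ge z$ in $A$ with $n\equiv 2y-z\pmod{3^\ell}$, chosen so that $m:=(n-(2y-z))/3^\ell$ is nonnegative; the flexibility in the covering pair suffices when $n$ exceeds all elements of $A$, and the finitely many small-$n$ cases are verified directly. By the Stanley-sequence property of $T$, either $m\in T$, in which case $P=Q=m$ yields $p:=y+3^\ell P$ and $q:=z+3^\ell Q$ in $S$ with $2p-q=n$ and $p>q$ (the subcase $y=z$ would place $n\in S$, which is excluded), or $m\notin T$, in which case there exist $Q<P<m$ in $T$ with $2P-Q=m$, again producing $p,q\in S$ with $2p-q=n$ and $q<p<n$.

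Finally, to verify independence I would order $S$ as $a_0<a_1<\cdots$. Since $b_k=3^{k+\ell}$ for all $k\ge K$ (some threshold) while $\max A$ and the initial $b_0,\ldots,b_{K-1}$ are bounded, for all sufficiently large $k$ we have $b_k>\max A+\sum_{j<k}b_j$. This forces the first $2^{\ell+k}$ elements of $S$ to be exactly those representations with $\delta_j=0$ for $j\ge k$, while the next $2^{\ell+k}$ are obtained by adding $b_k$ to the first block. Thus $a_{2^{\ell+k}}=b_k$ and $a_{2^{\ell+k}+i}=a_{2^{\ell+k}}+a_i$ for $0\le i<2^{\ell+k}$, while $a_{2^{\ell+k}-1}=\max A+\sum_{j<k}b_j$. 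Using $b_k=3^{k+\ell}$ for $k\ge K$, a telescoping calculation shows that $b_k-2\sum_{j<k}b_j$ stabilizes to a constant in $k$, so the character $\lambda=2a_{2^{\ell+k}-1}-b_k+1$ is independent of $k$.

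The main obstacle is the second step: meshing the covering property of the near-modular $A$ with that of the basic sequence $T$, while handling the small-$n$ edge cases and verifying that the produced pair $(p,q)$ lies strictly below $n$. The single would-be degenerate subcase turns out to force $n\in S$, resolving the apparent issue cleanly. The remainder is a routine adaptation of the argument used by Moy and Rolnick for their Theorem~4.4.
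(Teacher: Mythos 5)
Your argument is correct in outline but takes a genuinely different route from the paper. The paper truncates the basis at an index $n_0$ beyond which $b_n=3^{n+\ell}$ dominates everything before it, and then proves by hand that the finite set $L=\{a+\sum_{k<n_0}\delta_k b_k\}$ is a \emph{modular} set modulo $3^{n_0+\ell}$ --- the freeness half by stripping off the $A$-part mod $3^\ell$ and then peeling ternary digits, and the covering half by an explicit inductive digit-by-digit construction with an eighteen-subcase analysis --- after which Theorem 1.7 ($S(L)=L+3^{n_0+\ell}\cdot S(0)$) finishes the job. You instead factor the whole set as $S=A+3^\ell T$, where $T$ is the basic sequence on the rescaled basis $(b_k/3^\ell)$, observe that this rescaled basis satisfies the hypotheses of Theorem 1.9, and then import both the $3$-AP-freeness and the covering property of $T$ as black boxes, meshing them with the two near-modularity clauses of $A$. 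This buys you a much shorter argument that never reopens the digit casework (it is hidden inside the proof of Theorem 1.9), at the price of having to handle the interface: you need $n\ge 2\max A$ (not merely $n>\max A$, since $2y-z$ can be as large as $2\max A$) plus enough room for $m=(n-(2y-z))/3^\ell$ to clear the Stanley threshold of $T$; and the ``finitely many small-$n$ cases'' should not be described as ``verified directly'' --- for an arbitrary near-modular $A$ there is nothing to compute --- but rather absorbed by taking the seed set $A'$ of the Stanley sequence to be a sufficiently long initial segment of $S$, after which the covering property is only needed for $n>\max A'$. With that standard adjustment your case analysis ($m\in T$ versus $m\notin T$, with $y=z$ forcing $n\in S$) is sound, the inequalities $q<p<n$ check out, and the stabilization of $\lambda=2a_{2^{\ell+k}-1}-b_k+1$ follows from the telescoping you describe, so the independence claim also goes through.
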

\begin{proof}
There exists an index $n_0$ such that for all $n\ge n_0$, $b_n=3^{n+\ell}$, and $b_n> \max{A}+\sum_{k=0}^{n-1} b_k$. Using Theorem 1.7 it suffices to demonstrate that 
\[L:=\bigg\{a+\sum_{k=0}^{n_0-1} \delta_kb_k~|~ a\in A, \delta_k\in\{0,1\}\bigg\}\] is a modular set modulo $b_{n_0}=3^{n_0+\ell}$.\\\\
In order to prove that $L$ is modular first note that $0\in L$ and $L\subseteq \{0,\ldots,3^{n_0+\ell}-1\}$. Now suppose that $x$, $y$, and $z$ in $L$ satisfy $x\equiv 2y-z \mod 3^{n_0+\ell}$ with $x$, $y$, and $z$ not all equal. Let $x=x_A+x_B$, $y=y_A+y_B$, and $z=z_A+z_B$, where $x_A, y_A, z_A$ are the parts of $x$, $y$, and $z$ from $A$ and the remaining parts $x_B, y_B, z_B$ come from $B=(b_k)$. Then for a particular number $x$, define the $\delta_i(x)$ as the coefficient of $b_i$ when considered as an element in $L$. In order to see that $\delta_i(x)$ is well define suppose that $x$ has two different representations. Then taking$\mod 3^{\ell}$, the component coming from $A$ is unique. Then taking$\mod 3^{{\ell}+1}$ it follows that $\delta_0(x)$ is well defined and continuing in such a manner it follows that $\delta_i(x)$ is well defined. Now since $3^{\ell}$ divides $x_B, y_B, z_B$ it follows that $x_A\equiv 2y_A-z_A \mod 3^{\ell}$. However $A$ is near-modular with respect to $3^{\ell}$ and thus it follows that $x_A=y_A=z_A$. Therefore $x_B\equiv 2y_B-z_B \mod 3^{k+\ell}$. Furthermore note that for $k\ge 1$, $3^{\ell+1}$ divides $b_k$ and since $x_B\equiv 2y_B-z_B \mod 3^{\ell+1}$ it follows that $\delta_0(x)=\delta_0(y)=\delta_0(z)=0$ or $\delta_0(x)=\delta_0(y)=\delta_0(z)=1$ since $\delta_0(t)\in\{0,1\}$ for all elements in $L$. Continuing in this manner, it follows that $x=y=z$ which is a contradiction to the assumption that all $x$, $y$, and $z$ are not all equal\\\\
Now consider $0\le x<3^{n_0+\ell}$. We construct $y$ and $z$ such that $2y-z\equiv x\mod 3^{n_0+\ell}$. By definition of near-modular there exist $y_A\ge z_A$ such that $2y_A-z_A\equiv x_A \mod 3^{\ell}$. Now set $y_A=y^{(-1)}$ and $z_A=z^{(-1)}$ and note that $2y^{(-1)}-z^{(-1)}$ and $x$ match for the first $\ell$ digits. (Digits refers to the ternary expansion.) We now inductively construct the desired integers so that $2y^{(m-1)}-z^{(m-1)}$ matches the first $\ell+m$ digits of $x$, $y^{(m-1)}\ge x^{(m-1)}$, and $\delta_i(y^{(m-1)})=\delta_i(z^{(m-1)})=0$ for $i\ge m$. \\
There are two possible cases with several possible sub-cases.\\\\
Case 1: Suppose that ${b_m}\equiv {3^{m+\ell}} \mod 3^{m+\ell+1}$. 
\begin{itemize}
\item Case 1a: We now subdivide cases based the $\ell+m+1^{st}$ digit of $x$. Suppose that $x$ has $\ell+m+1^{st}$ digit $0$. If $y^{(m-1)}$ and $z^{(m-1)}$ have $\ell+m+1^{st}$ digits $0$ and $0$, $2$ and $1$, or $1$ and $2$, set $y^{(m)}=y^{(m-1)}$and $z^{(m)}=z^{(m-1)}$. If $y^{(m-1)}$ and $z^{(m-1)}$ have $\ell+m+1^{st}$ digits $1$ and $0$, $0$ and $1$, or $2$ and $2$, then $y^{(m)}=b_m+y^{(m-1)}$ and $z^{(m)}=b_m+y^{(m-1)}$. Finally, if $y^{(m-1)}$ and $z^{(m-1)}$ have $\ell+m+1^{st}$ digits $2$ and $0$, $0$ and $2$, or $1$ and $1$, then set $y^{(m)}=b_m+y^{(m-1)}$ and $z^{(m)}=z^{(m-1)}$. 
\item Case 1b: Suppose that $x$ has $\ell+m+1^{st}$ digit $1$. If $y^{(m-1)}$ and $z^{(m-1)}$ have $\ell+m+1^{st}$ digits $0$ and $0$, $2$ and $1$, or $1$ and $2$, set $y^{(m)}=y^{(m-1)}+b_m$ and $z^{(m)}=z^{(m-1)}+b_m$. If $y^{(m-1)}$ and $z^{(m-1)}$ have $\ell+m+1^{st}$ digits $1$ and $0$, $0$ and $1$, or $2$ and $2$, then $y^{(m)}=b_m+y^{(m-1)}$ and  $z^{(m)}=z^{(m-1)}$. Finally, if $y^{(m-1)}$ and $z^{(m-1)}$ have $\ell+m+1^{st}$ digits $2$ and $0$, $0$ and $2$, or $1$ and $1$, then set $y^{(m)}=y^{(m-1)}$ and $z^{(m)}=z^{(m-1)}$. 
\item Case 1c: Finally suppose that $x$ has $\ell+m+1^{st}$ digit $2$. If $y^{(m-1)}$ and $z^{(m-1)}$ have $\ell+m+1^{st}$ digits $0$ and $0$, $2$ and $1$, or $1$ and $2$, then $y^{(m)}=y^{(m-1)}+b_m$ and $z^{(m)}=z^{(m-1)}$. If $y^{(m-1)}$ and $z^{(m-1)}$ have $\ell+m+1^{st}$ digits $1$ and $0$, $0$ and $1$, or $2$ and $2$, then set $y^{(m)}=y^{(m-1)}$ and $z^{(m)}=z^{(m-1)}$. Finally, if $y^{(m-1)}$ and $z^{(m-1)}$ have $\ell+m+1^{st}$ digits $2$ and $0$, $0$ and $2$, or $1$ and $1$, then $y^{(m)}=y^{(m-1)}+b_m$ and $z^{(m)}=z^{(m-1)}+b_m$. 
\end{itemize}
Case 2: Suppose that $b_m\equiv 2\cdot{3^{m+\ell}} \mod 3^{m+\ell+1}$. 
\begin{itemize}
\item Case 2a: Suppose that $x$ has $\ell+m+1^{st}$ digit $0$. If $y^{(m-1)}$ and $z^{(m-1)}$ have $\ell+m+1^{st}$ digits $0$ and $0$, $2$ and $1$, or $1$ and $2$, set $y^{(m)}=y^{(m-1)}$ and $z^{(m)}=z^{(m-1)}$. If $y^{(m-1)}$ and $z^{(m-1)}$ have $\ell+m+1^{st}$ digits $1$ and $0$, $0$ and $1$, or $2$ and $2$, then $y^{(m)}=y^{(m-1)}+b_m$ and $z^{(m)}=z^{(m-1)}$. Finally, if $y^{(m-1)}$ and $z^{(m-1)}$ have $\ell+m+1^{st}$ digits $2$ and $0$, $0$ and $2$, or $1$ and $1$, then $y^{(m)}=y^{(m-1)}+b_m$ and $z^{(m)}=z^{(m-1)}+b_m$. 
\item Case 2b: Suppose that $x$ has $\ell+m+1^{st}$ digit $1$. If $y^{(m-1)}$ and $z^{(m-1)}$ have $\ell+m+1^{st}$ digits $0$ and $0$, $2$ and $1$, or $1$ and $2$, then $y{(m)}=y^{(m-1)}+b_m$ and $z^{(m)}=z^{(m-1)}$. If $y^{(m-1)}$ and $z^{(m-1)}$ have $\ell+m+1^{st}$ digits $1$ and $0$, $0$ and $1$, or $2$ and $2$, then $y^{(m)}=y^{(m)}+b_m$ and $z^{(m)}=z^{(m-1)}+b_m$. Finally if $y^{(m-1)}$ and $z^{(m-1)}$ have $\ell+m+1^{st}$ digits $2$ and $0$, $0$ and $2$, or $1$ and $1$, then set $y^{(m)}=y^{(m-1)}$ and $z^{(m)}=z^{(m-1)}$. 
\item Case 2c: Finally suppose that $x$ has $\ell+m+1^{st}$ digit $2$. If $y^{(m-1)}$ and $z^{(m-1)}$ have $\ell+m+1^{st}$ digits $0$ and $0$, $2$ and $1$, or $1$ and $2$, add $y^{(m)}=y^{(m-1)}+b_m$ and $z^{(m)}=z^{(m-1)}+b_m$. If $y^{(m-1)}$ and $z^{(m-1)}$ have $\ell+m+1^{st}$ digits $1$ and $0$, $0$ and $1$, or $2$ and $2$, then set $y^{(m)}=y^{(m-1)}$ and $z^{(m)}=z^{(m-1)}$. Finally if $y^{(m-1)}$ and $z^{(m-1)}$ have $\ell+m+1^{st}$ digits $2$ and $0$, $0$ and $2$, or $1$ and $1$, then add $y^{(m)}=y^{(m-1)}+b_m$ and $z^{(m)}=z^{(m-1)}$. 
\end{itemize}
Inductively continuing until $m=n_0-1$, we have that $2y^{(n_0-1)}-z^{(n_0-1)}\equiv x \mod 3^{n_0+\ell}$ and thus taking $y^{(n_0-1)}$ and $z^{(n_0-1)}$ to be $y$ and $z$ gives $2y-z\equiv x \mod 3^{n_0+\ell}$ with $y\ge z$ and $y, z\in L$. Thus the desired result follows.\\
\end{proof}
With this particular lemma in hand it is possible to prove the main result, Theorem 1.5 mentioned earlier. For the initial case, we rely on Theorem 1.9 mentioned earlier and due to Moy and Rolnick \cite{RM}.
\begin{lemma}
All nonnegative integers $\lambda$ $\equiv 0$$\mod 6$ and $\equiv 2$$\mod 6$ can be achieved as characters of independent sequences. Furthermore these can be achieved using basic sequences.
\end{lemma}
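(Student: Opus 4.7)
The plan is to apply Theorem 1.9 with a one-parameter family of bases differing from the standard $(1, 3, 9, \ldots)$ only in the initial term. Given $\lambda \geq 0$ with $\lambda \equiv 0$ or $2 \pmod{6}$, I would set
\[ b_0 = \frac{\lambda}{2} + 1, \qquad b_k = 3^k \text{ for } k \geq 1. \]
The congruence hypothesis on $\lambda$ is precisely what forces $b_0 \not\equiv 0 \pmod{3}$: if $\lambda \equiv 0 \pmod{6}$ then $b_0 \equiv 1 \pmod{3}$, and if $\lambda \equiv 2 \pmod{6}$ then $b_0 \equiv 2 \pmod{3}$. (Conversely, $\lambda \equiv 4 \pmod{6}$ would force $b_0 \equiv 0 \pmod 3$, which is why this family produces exactly these two residue classes.) Hence $3^0$ is the largest power of $3$ dividing $b_0$, both conditions of Theorem 1.9 are satisfied, and $B = (b_k)$ is a valid basis yielding a basic Stanley sequence.

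Next, I would compute the character. For $k$ large enough that $3^k > 2b_0 - 3$, the term $b_k = 3^k$ exceeds $b_0 + b_1 + \cdots + b_{k-1} = b_0 + (3^k - 3)/2$, so from that index on the magnitudes of the elements track the ordering by highest basis bit: $a_{2^k} = b_k = 3^k$ and $a_{2^k-1} = b_0 + b_1 + \cdots + b_{k-1}$. A direct calculation then gives
\[ 2 a_{2^k - 1} - a_{2^k} + 1 \;=\; 2b_0 + (3^k - 3) - 3^k + 1 \;=\; 2b_0 - 2 \;=\; \lambda. \]
The shift relation $a_{2^k + i} = a_{2^k} + a_i$ for $0 \leq i < 2^k$ holds in the same range of $k$, since once $b_k$ dominates all earlier partial sums, the elements of index at least $2^k$ are exactly $b_k$ plus the elements of smaller index. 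This establishes independence with character $\lambda$.

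The proof is essentially immediate once the right basis is in hand, so the main point requiring care is rather modest: for $\lambda \geq 6$ we have $b_0 > b_1 = 3$, so at small indices the magnitude ordering of the sequence disagrees with the natural bit ordering of the basis, and the two independence relations can fail there. This is harmless, because the definition of an independent Stanley sequence only demands that the relations hold for $k \geq \chi$ for some threshold, and any $\chi$ satisfying $3^{\chi} > 2b_0 - 3$ suffices. Consequently every $\lambda \equiv 0$ or $2 \pmod{6}$ is achieved as the character of a basic, and hence independent, Stanley sequence.
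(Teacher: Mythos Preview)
Your proof is correct and noticeably more economical than the paper's. Both arguments invoke Theorem~1.9 and compute the character from the identity $\lambda = 2a_{2^{k}-1} - a_{2^{k}} + 1$ once the basis eventually dominates its partial sums, but you vary only the single parameter $b_0 = \lambda/2 + 1$, observing that $\lambda \equiv 0,2 \pmod 6$ is exactly the condition $3 \nmid b_0$, and then read off $\lambda = 2b_0 - 2$ directly. The paper instead allows every $b_i$ for $0 \le i \le k$ to be an arbitrary multiple of $3^i$ coprime to~$3$, derives $\lambda = 2\sum_{i=0}^{k}(b_i - 3^i)$, and passes through a generating-function product to identify the attainable residues. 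For this lemma that extra generality is unnecessary (indeed, specializing the paper's formula to $b_i = 3^i$ for $i \ge 1$ recovers your computation), but the paper's setup is deliberately chosen because the same generating-function manipulation is reused verbatim in the proof of Lemma~2.4, where the base-level set $A$ is near-modular and a single free parameter no longer suffices. Your one-line family is the cleaner way to prove \emph{this} lemma; the paper's version is the one that ports to the next.
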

\begin{proof}
Consider a basic sequence $S(A)=(a_k)$ with a basis $B=(b_k)$. Suppose that if $b_i=3^i$ for $i\ge k+1$, $b_{k+1}> \sum_{i=0}^{k}b_i$ and $(b_i)$ is a sequence which forms a basis, then it follows that $a_{2^{k+2}-1}=\sum_{i=0}^{k+1}b_i$ and that $a_{2^{k+2}}=b_{k+2}$. The character of any independent sequence is $\lambda=2a_{2^{k+1}-1}-a_{2^{k+1}}+1$, and using the previous definitions, it follows that \[\lambda=2\bigg(\sum_{i=0}^{k+1} b_i\bigg)-b_{k+2}+1=2\bigg(\sum_{i=0}^{k}b_i\bigg)-3^{k+1}+1\]\[=2\bigg(\sum_{i=0}^{k}b_i-3^i\bigg).\]
\\
Now in order for $(b_i)$ to satisfy the requirement of Theorem 1.9 we require $b_i=3^i\cdot \ell$ with $3$ not dividing $\ell$ for $0\le i\le k$. For the remainder of the argument, the generating function of a particular expression is the ordinary generating function of the set of nonnegative integer values the expression takes including multiplicity. It follows that $2(b_i-3^i)$ has a generating function of the form 
\[\sum_{j=0}^{\infty}x^{6j\cdot 3^i}+x^{(6j+2)\cdot 3^i}=\frac{1}{1-x^{2\cdot 3^i}}-\frac{x^{2\cdot 3^i}}{1-x^{2\cdot 3^{i+1}}}\]\[=\frac{1+x^{2\cdot 3^i}}{1-x^{2\cdot 3^{i+1}}}.\] Therefore the generating function of 
$2\bigg(\sum_{i=0}^{k}b_i-3^i\bigg)$ is \[\prod_{\ell=0}^{k}\frac{1+x^{2\cdot 3^\ell}}{1-x^{2\cdot 3^{\ell+1}}}\]\[=\bigg(\prod_{\ell=1}^{k-1}\frac{1+x^{2\cdot 3^\ell}}{1-x^{2\cdot 3^{\ell}}}\bigg)\bigg(\frac{1+x^2}{1-x^{2\cdot 3^{k+1}}}\bigg).\] Note that each of the terms in the product has a Taylor series with positive coefficients, so we can ``decrease" the coefficients of each individual term in the product and not introduce new positive coefficients. In particular note that \[\frac{1+x^{2\cdot 3^\ell}}{1-x^{2\cdot 3^{\ell}}}=1+2\sum_{k=1}^{\infty}x^{2k\cdot 3^\ell}\] and this contains the terms of the expansion $1+x^{2\cdot 3^\ell}+x^{4\cdot 3^\ell}$ while 
\[\frac{1}{1-x^{2\cdot 3^{\ell}}}=\sum_{k=0}^{\infty}x^{2k\cdot 3^\ell}\] 
contains $1+x^{2\cdot 3^k}+x^{4\cdot 3^k}$. Thus we can look at the product \[(1+x^2)\bigg(\prod_{\ell=1}^{k}1+x^{2\cdot 3^\ell}+x^{4\cdot 3^\ell}\bigg).\] But this is simply the the generating function for the sequence of numbers of the form $2(i_k i_{k-1}\cdots i_1 i_0)_3$ where the $i_j$ are digits in base $3$ and the only restriction is $0\le i_0\le 1$. Now the condition that $b_{k+1}> \sum_{i=0}^{k}b_i$ is equivalent to
$2\cdot 3^{k+1}> \sum_{i=0}^{k}2\cdot b_i$. Rearranging, it follows that $2\cdot 3^{k+1}-2\sum_{i=0}^{k}3^i\ge \sum_{i=0}^{k}2\big(b_i-3^i\big)$ or $3^{k+1}+1\ge \sum_{i=0}^{k}2\big(b_i-3^i\big)$. Thus, all characters of the above form $2(i_k i_{k-1}\cdots i_1 i_0)_3$ with $i_k=0$ and $0\le i_0\le 1$ give valid basic sequences. Taking $k$ sufficiently large, it follows that all numbers of the form $2(3\ell+1)$ or $2(3\ell)$, can be achieved as characters of an independent Stanley sequence ,and thus the desired result follows. 
\end{proof}
To construct the remaining characters claimed in Theorem 2.3 we construct a series of near-modular sets. Define the sets $A_i^{j}$ and $B_{i}^{j}$ for $1\le i\le 4$ as \[A_1^{j}=\{0,2,5,6+9j\}\] \[B_1^{j}=\{0,4,10,12+9j\}\] 
\[A_2^{j}=\{0,1,4,6,10,13,15,18+27j\}\] \[B_2^{j}=\{0,2,8,12,20,26,30,36+27j\}\]\[A_3^{j}=\{0,2,3,5,11,14,18,21,29,30,32,38,41,45,48,54+81j\}\]\[B_3^{j}=\{0,4,6,10,22,28,36,42,\\58,60,64,76,82,90,96,108+81j\}\] \[A_4^{j}=\{0,2,8,9,15,20,24,26,54,56,62,63,69,74,78,80,83,89,90,96,101,105,\]\[107,135,137,143,144,150,155,159,161,162+243j\}\]\[B_4^{j}=\{0,4,16,18,30,40,48,52,108,112,124,126,138,148,156,160,166,178,180,192,202,210,\]\[214,270,274,286,288,300,310,318,322,324+243j\}\] In particular, $A_{i}^{j}$ and $B_{i}^{j}$ are near-modular sets $\mod 3^{i+1}$ for $1\le i\le 4$. To demonstrate this note that $A_{i}^{0}$ and $B_{i}^{0}$ are near-modular sets via a computer verification and since we are simply increasing the largest element by a multiple of $3^{i+1}$ it is clear that  $A_{i}^{j}$ and $B_{i}^{j}$ are near-modular sets as well. (That $A_{i}^{0}$ and $B_{i}^{0}$ are near-modular sets verified in an auxiliary Java file.)
\begin{lemma}
All nonnegative integers $\lambda$ $\equiv 4$$\mod 6$ and $\lambda\not\equiv 244 \mod 486$ can be achieved as characters of independent sequences.
\end{lemma}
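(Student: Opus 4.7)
The plan is to apply Lemma 2.1 to each of the eight families $A_i^j, B_i^j$ ($1 \le i \le 4$), read off the resulting character as a residue class modulo $2\cdot 3^{i+1}$, and verify that together these residue classes cover every $\lambda \equiv 4 \pmod{6}$ except $\lambda \equiv 244 \pmod{486}$. Let $A$ be one of these sets, so $A$ is near-modular with respect to $3^{i+1}$ and $|A|=2^{i+1}$; take the basis $b_k = 3^{k+i+1}$ for every $k \ge 0$, which satisfies the hypotheses of Lemma 2.1 with $\ell = i+1$. For $K$ large enough that $b_{K+1} > \max(A) + \sum_{k=0}^K b_k$, the Stanley sequence produced by Lemma 2.1 has $a_{2^{K+\ell+1}-1} = \max(A) + \sum_{k=0}^K b_k$ and $a_{2^{K+\ell+1}} = b_{K+1}$, so its character is
\[
\lambda = 2\bigl(\max(A) + \sum_{k=0}^K b_k\bigr) - 3^{K+\ell+1} + 1 = 2\max(A) - 3^{i+1} + 1,
\]
where the second equality uses $2\sum_{k=0}^K 3^{k+\ell} = 3^{K+\ell+1} - 3^\ell$. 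Since $\max(A_i^j) = \max(A_i^0) + j\cdot 3^{i+1}$ and similarly for $B_i^j$, varying $j\ge 0$ sweeps $\lambda$ through the entire residue class modulo $2\cdot 3^{i+1}$ starting from the base value $2\max(A_i^0) - 3^{i+1} + 1$.

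A direct computation of $2\max(\cdot) - 3^{i+1} + 1$ for the base sets yields the following eight residue classes: $A_1^j$ realizes $\lambda \equiv 4 \pmod{18}$ and $B_1^j$ realizes $\lambda \equiv 16 \pmod{18}$; $A_2^j$ realizes $\lambda \equiv 10 \pmod{54}$ and $B_2^j$ realizes $\lambda \equiv 46 \pmod{54}$; $A_3^j$ realizes $\lambda \equiv 28 \pmod{162}$ and $B_3^j$ realizes $\lambda \equiv 136 \pmod{162}$; and $A_4^j$ realizes $\lambda \equiv 82 \pmod{486}$ and $B_4^j$ realizes $\lambda \equiv 406 \pmod{486}$.

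I would then verify coverage by lifting one modulus level at a time. Modulo $18$ the residues $\equiv 4 \pmod{6}$ are $\{4,10,16\}$, of which the first pair covers $\{4,16\}$, leaving $10 \pmod{18}$. Lifting to $\mathbb{Z}/54\mathbb{Z}$, the class $10 \pmod{18}$ splits as $\{10,28,46\}$; $A_2,B_2$ cover $\{10,46\}$, leaving $28 \pmod{54}$. Lifting to $\mathbb{Z}/162\mathbb{Z}$, the class $28 \pmod{54}$ splits as $\{28,82,136\}$; $A_3,B_3$ cover $\{28,136\}$, leaving $82 \pmod{162}$. Lifting to $\mathbb{Z}/486\mathbb{Z}$, the class $82 \pmod{162}$ splits as $\{82,244,406\}$; $A_4,B_4$ cover $\{82,406\}$, leaving exactly $244 \pmod{486}$ as the sole uncovered class. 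The main obstacle is the clean derivation of the character formula from Lemma 2.1 with the choice $b_k = 3^{k+\ell}$ and the residue-class bookkeeping above; both are routine given the explicit near-modular sets supplied, so the creative content of the argument is really the choice of the eight sets $A_i^j, B_i^j$ so that their eight residue classes intersect $\{\lambda \equiv 4 \pmod{6}\}$ in precisely the complement of $\{\lambda \equiv 244 \pmod{486}\}$.
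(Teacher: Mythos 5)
Your proposal is correct, and it follows the paper's overall strategy — the same eight families $A_i^j, B_i^j$, the same technical lemma on near-modular sets plus a basis, and the same final residue classes $3^i+1$ and $5\cdot 3^i+1$ modulo $2\cdot 3^{i+1}$ — but it is cleaner in one genuine respect. The paper allows the basis elements $b_m$ to be arbitrary (subject to $3^{m+i+1}\,\|\,b_m$) and runs a generating-function computation to determine which multiples of $2\cdot 3^{i+1}$ the sum $\sum_m 2(b_m - 3^{m+i+1})$ can contribute to the character; this machinery is carried over from the proof of the $\lambda\equiv 0,2\pmod 6$ case, where it is actually needed, but in the present lemma it is redundant with the parameter $j$. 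You instead fix the trivial basis $b_k=3^{k+i+1}$, get the one-line formula $\lambda = 2\max(A)-3^{i+1}+1$, and let $j$ alone sweep the full residue class (which works because the base value $2\max(A_i^0)-3^{i+1}+1$ is the least nonnegative representative of its class). Your explicit lifting argument $18\to 54\to 162\to 486$ for the coverage is also spelled out more carefully than in the paper, which leaves that verification implicit. The one thing worth keeping in mind is that the derivation of the character from the technical lemma requires knowing that the $2^{K+\ell+1}$ elements below $b_{K+1}$ are exactly the sums $a+\sum_{k\le K}\delta_k b_k$ (distinctness follows from the digit argument inside that lemma's proof), so that $a_{2^{K+\ell+1}-1}$ and $a_{2^{K+\ell+1}}$ are as you claim; you state this correctly and the growth condition $b_{K+1}>\max(A)+\sum_{k=0}^K b_k$ you impose is exactly what makes it go through.
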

\begin{proof}
First note that this lemma in combination with the previous lemma proves Theorem 1.5. Now note that the largest element in $A_{i}^{j}$ is $2\cdot 3^i+3^{i+1}\cdot j$ and $|A_{i}^{j}|=2^{i+1}$. Suppose that $b_n=3^{n+i+1}$ for $n\ge k+1$, $3^{n+i+1}$ but not $3^{n+i+2}$ divides $b_n$ for all $n$, and $b_{k+1}> \big(\sum_{m=0}^{k}b_m\big)+\max\{A_{i}^{j}\}$. The proof of Theorem 2.3 implies that $a_{2^{k+3+i}-1}=2\cdot 3^i+3^{i+1}\cdot j+\sum_{m=0}^{k+1}b_m$ while $a_{2^{k+3+i}}=b_{k+2}$. The desired character $\lambda$ is therefore
\[\lambda=2a_{2^{k+3+i}-1}-a_{2^{k+3+i}}+1\]
\[=2\bigg(\max\{A_{i}^{j}\}+\sum_{j=0}^{k+1}b_j-\sum_{j=0}^{i+k+2}3^j\bigg)\]
\[=2\bigg(\max\{A_{i}^{j}\}-\sum_{j=0}^{i}3^j+\sum_{j=0}^{k+1}\Big(b_j-3^{j+i+1}\Big)\bigg)\]
\[=2j\cdot 3^{i+1}+2\bigg(\sum_{j=0}^{k+1}\big(b_j-3^{j+i+1}\big)\bigg)+3^i+1\]
\[=2j\cdot 3^{i+1}+2\bigg(\sum_{j=0}^{k}\big(b_j-3^{j+i+1}\big)\bigg)+3^i+1.\]
The last step follows from the assumption that $b_{k+1}=3^{k+i+2}$. It follows that $2(b_j-3^{j+i+1})$ has a generating function of the form 
\[\sum_{\ell=0}^{\infty}x^{6\ell\cdot 3^{j+i+1}}+x^{(6\ell+2)\cdot 3^{j+i+1}}\]\[=\frac{1+x^{2\cdot 3^{j+i+1}}}{1-x^{2\cdot 3^{j+i+2}}}.\] Therefore the given character has generating function of the form
\[x^{3^i+1}\bigg(\frac{1}{1-x^{2\cdot 3^{i+1}}}\bigg)\prod_{j=0}^{k}\frac{1+x^{2\cdot 3^{j+i+1}}}{1-x^{2\cdot 3^{j+i+2}}}.\] Rearranging this expression, we obtain
\[x^{3^i+1}\bigg(\frac{1}{1-x^{2\cdot 3^{k+i+2}}}\bigg)\prod_{j=0}^{k}\frac{1+x^{2\cdot 3^{j+i+1}}}{1-x^{2\cdot 3^{j+i+1}}}.\] Replacing the second factor with $1$ and $\frac{1+x^{2\cdot 3^{j+i+1}}}{1-x^{2\cdot 3^{j+i+1}}}$ with 
$1+x^{2\cdot 3^{j+i+1}}+x^{4\cdot 3^{j+i+1}}$ we obtain the simplified generating function
 \[x^{3^i+1}\prod_{j=0}^{k}\Big(1+x^{2\cdot 3^{j+i+1}}+x^{4\cdot 3^{j+i+1}}\Big).\] Taking all $k$ sufficiently large, all numbers of the form $3^i+1+2\ell(3^{i+1})$ can be achieved for $1\le i\le 4$. (The condition $b_{k+1}> \sum_{i=0}^{k}b_i+\max\{A_{i}^{j}\}$ can be met by taking only numbers with at most $k+i$ digits.) Preforming the same procedure for $B_i^{j}$, all numbers of the form $5\cdot 3^{i}+1+2\ell(3^{i+1})$ can be achieved as characters of an independent Stanley sequence. The desired result follows by using $A_{i}^{j}$ and $B_{i}^{j}$ constructed earlier for $1\le i\le 4$.
\end{proof}
\section{Conclusion}
This paper is temptingly close to proving that all even characters can be achieved as characters of independent Stanley sequences. To demonstrate all even characters can be achieved it suffices to prove the following conjecture.
\begin{conjecture}
For all $\ell\in \mathbb{Z}^{+}$, there exists a near-modular set $A$$\mod 3^{\ell+1}$ with $|A|=2^{\ell+1}$, and largest element of $A$ being $2\cdot 3^{\ell}$ or $4\cdot 3^{\ell}$.
\end{conjecture}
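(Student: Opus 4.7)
The plan is to prove the conjecture by strong induction on $\ell$. The base cases $\ell=1,2,3,4$ are already handled by the sets $A_i^0$ and $B_i^0$ constructed above, whose largest elements are exactly $2\cdot 3^i$ and $4\cdot 3^i$ respectively. For the inductive step, given a near-modular set $A$ modulo $3^{\ell+1}$ with $|A|=2^{\ell+1}$ and $\max A\in\{2\cdot 3^\ell,\,4\cdot 3^\ell\}$, the goal is to produce an analogous $A'$ modulo $3^{\ell+2}$ with $|A'|=2^{\ell+2}$ and $\max A'\in\{2\cdot 3^{\ell+1},\,4\cdot 3^{\ell+1}\}$.

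The first construction I would attempt is a doubling of the form $A'=A\cup(\sigma(A)+c)$, where $\sigma$ is an affine map on residues (possibly the identity) and $c$ is a shift chosen so that $\max A'$ lies in the prescribed set. With such a construction, $|A'|=2^{\ell+2}$ is automatic. The absence of nontrivial $3$-APs can then be approached by reducing modulo $3^{\ell+1}$: any $3$-AP in $A'$ modulo $3^{\ell+2}$ projects to a $3$-AP in $A$ modulo $3^{\ell+1}$, which by inductive hypothesis is trivial. A case analysis on which half (the $A$-part or the $(\sigma(A)+c)$-part) contains each of $x$, $y$, $z$, similar in spirit to the digit-tracking argument in the proof of Lemma 2.2, should then eliminate the non-diagonal cases provided $c$ has been chosen carefully.

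The main obstacle, and the reason I expect the conjecture to be genuinely hard, is the covering condition. The density $|A|/3^{\ell+1}=(2/3)^{\ell+1}$ tends to zero, so the pairs $y,z\in A'$ with $y\ge z$ must cover every residue modulo $3^{\ell+2}$ without excess. In a doubling construction this forces the cross-pairs between $A$ and $\sigma(A)+c$ to fill in exactly those residues that the trivial lift of $A$'s covering pattern misses modulo $3^{\ell+2}$, while simultaneously not creating any new nontrivial $3$-APs. Balancing both constraints through a single choice of $c$ may not be achievable in general, and the explicit sets $A_i^0$, $B_i^0$ do not obviously exhibit a recursive pattern whose iteration would supply the correct shift.

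A plausible strengthening that might make the induction go through is to carry a richer invariant: a family of near-modular sets parametrized by additional data (for example, a secondary allowed maximum or a list of valid shifts) and closed under an explicit doubling operation. A natural intermediate step would be a directed computer search for $\ell=5,6,7$; if the resulting sets display recognizable structure, one could guess a closed-form construction and verify near-modularity through a digit-level case analysis along the lines of the Case 1/Case 2 split in Lemma 2.2. Absent such a pattern, the full conjecture appears to require a genuinely new idea, consistent with its being left open in the paper.
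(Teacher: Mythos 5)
This statement is a conjecture that the paper itself leaves open: the only thing the paper establishes is the cases $\ell\in\{1,2,3,4\}$, via the explicit sets $A_i^0$ and $B_i^0$, exactly as you note in your base case. So there is no paper proof to compare against, and your proposal --- which candidly does not claim to close the inductive step --- is best read as an accurate assessment of why the problem is open rather than as a proof. Your identification of the two competing constraints in a doubling construction $A'=A\cup(\sigma(A)+c)$ is the right diagnosis: the no-AP condition behaves well under reduction modulo $3^{\ell+1}$ (the same digit-by-digit argument as in Lemma 2.2 applies), but the covering condition is where any naive lift breaks, because the residues modulo $3^{\ell+2}$ that reduce to a given covered residue modulo $3^{\ell+1}$ must each be hit by some pair, and the cross-pairs between the two translates are the only new resource available.

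One small quantitative correction: the covering need not be ``without excess.'' The number of ordered pairs $y\ge z$ from a set of size $2^{\ell+2}$ is on the order of $4^{\ell+2}$, while the modulus is $3^{\ell+2}$, so the ratio grows like $(4/3)^{\ell}$ and there is in fact increasing room for overcounting; the difficulty is not a counting obstruction but a structural one, namely arranging the cross-differences to land where they are needed while the AP-freeness forbids certain configurations. Your suggested remedies --- carrying a richer inductive invariant, or extracting a closed-form pattern from a computer search at $\ell=5,6,7$ --- are reasonable next steps and are consistent with the paper's own framing of the conjecture as the missing ingredient for covering all even characters.
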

The sets $A_{i}^{0}$ and $B_{i}^{0}$ prove the conjecture for $\ell\in\{1,2,3,4\}$ by explicit construction. Finally, given the approach in this paper, the following question arises naturally.
\begin{question}
What characters can be achieved as characters of basic sequences?
\end{question}
Note that it is possible to achieve odd characters under certain bases, one example is given by $S(0,1,7)$ which has basis $\{1,7,10,30,90,270,\ldots\}$ and has character $7$. However which odd characters can be achieved is not immediately clear given the approach in this paper.
\section*{Acknowledgments}
The research was conducted at the University of Minnesota Duluth REU and was supported by NSF grant 1659047. The author would like to thank Joe Gallian for suggesting the topic of Stanley sequences, Colin Defant and Joe Gallian for reading over the manuscript with infinite patience, and to Richard Moy and David Rolnick for helpful conversations.

\bibliography{Character Values}
\bibliographystyle{plain}
\end{document}